\newcommand{\nc}{\newcommand}
\renewcommand{\AA}{{\mathbb{A}}}
\nc{\CC}{{\mathbb{C}}}
\nc{\LL}{{\mathbb{L}}}
\nc{\RR}{{\mathbb{R}}}
\nc{\PP}{{\mathbb{P}}}
\nc{\OO}{{\mathbb{O}}}
\nc{\QQ}{{\mathbb{Q}}}
\nc{\ZZ}{{\mathbb{Z}}}
\nc{\cA}{{\mathscr{A}}}
\nc{\cB}{{\mathscr{B}}}
\nc{\cC}{{\mathscr{C}}}
\nc{\cD}{{\mathscr{D}}}
\nc{\cE}{{\mathscr{E}}}
\nc{\cF}{{\mathscr{\rF}}}
\nc{\cG}{{\mathscr{G}}}
\nc{\cH}{{\mathscr{H}}}
\nc{\cI}{{\mathscr{I}}}
\nc{\cJ}{{\mathscr{J}}}
\nc{\cK}{{\mathscr{K}}}
\nc{\cL}{{\mathscr{L}}}
\nc{\cM}{{\mathscr{M}}}
\nc{\cN}{{\mathscr{N}}}
\nc{\cO}{{\mathscr{O}}}
\nc{\cP}{{\mathscr{P}}}
\nc{\cQ}{{\mathscr{Q}}}
\nc{\cR}{{\mathscr{R}}}
\nc{\cS}{{\mathscr{S}}}
\nc{\cT}{{\mathscr{T}}}
\nc{\cU}{{\mathscr{U}}}
\nc{\cV}{{\mathscr{V}}}
\nc{\cW}{{\mathscr{W}}}
\nc{\cX}{{\mathscr{X}}}
\nc{\cY}{{\mathscr{Y}}}
\nc{\cZ}{{\mathscr{Z}}}
\nc{\bA}{{\mathbf{A}}}
\nc{\bB}{{\mathbf{B}}}
\nc{\bC}{{\mathbf{C}}}
\nc{\bD}{{\mathbf{D}}}
\nc{\bE}{{\mathbf{E}}}
\nc{\bF}{{\mathbf{\rF}}}
\nc{\bG}{{\mathbf{G}}}
\nc{\bH}{{\mathbf{H}}}
\nc{\bI}{{\mathbf{I}}}
\nc{\bJ}{{\mathbf{J}}}
\nc{\bK}{{\mathbf{K}}}
\nc{\bL}{{\mathbf{L}}}
\nc{\bM}{{\mathbf{M}}}
\nc{\bN}{{\mathbf{N}}}
\nc{\bO}{{\mathbf{O}}}
\nc{\bP}{{\mathbf{P}}}
\nc{\bQ}{{\mathbf{Q}}}
\nc{\bR}{{\mathbf{R}}}
\nc{\bS}{{\mathbf{S}}}
\nc{\bT}{{\mathbf{T}}}
\nc{\bU}{{\mathbf{U}}}
\nc{\bV}{{\mathbf{V}}}
\nc{\bW}{{\mathbf{W}}}
\nc{\bX}{{\mathbf{X}}}
\nc{\bY}{{\mathbf{Y}}}
\nc{\bZ}{{\mathbf{Z}}}
\nc{\ba}{{\mathbf{a}}}
\nc{\bb}{{\mathbf{b}}}
\nc{\bc}{{\mathbf{c}}}
\nc{\bd}{{\mathbf{d}}}
\nc{\be}{{\mathbf{e}}}
\nc{\bg}{{\mathbf{g}}}
\nc{\bh}{{\mathbf{h}}}
\nc{\bi}{{\mathbf{i}}}
\nc{\bj}{{\mathbf{j}}}
\nc{\bk}{{\mathbf{k}}}
\nc{\bl}{{\mathbf{l}}}
\nc{\bm}{{\mathbf{m}}}
\nc{\bn}{{\mathbf{n}}}
\nc{\bo}{{\mathbf{o}}}
\nc{\bp}{{\mathbf{p}}}
\nc{\bq}{{\mathbf{q}}}
\nc{\br}{{\mathbf{r}}}
\nc{\bs}{{\mathbf{s}}}
\nc{\bt}{{\mathbf{t}}}
\nc{\bu}{{\mathbf{u}}}
\nc{\bv}{{\mathbf{v}}}
\nc{\bw}{{\mathbf{w}}}
\nc{\bx}{{\mathbf{x}}}
\nc{\by}{{\mathbf{y}}}
\nc{\bz}{{\mathbf{z}}}
\nc{\fA}{{\mathfrak{A}}}
\nc{\fB}{{\mathfrak{B}}}
\nc{\fC}{{\mathfrak{C}}}
\nc{\fD}{{\mathfrak{D}}}
\nc{\fE}{{\mathfrak{E}}}
\nc{\fF}{{\mathfrak{\rF}}}
\nc{\fG}{{\mathfrak{G}}}
\nc{\fH}{{\mathfrak{H}}}
\nc{\fI}{{\mathfrak{I}}}
\nc{\fJ}{{\mathfrak{J}}}
\nc{\fK}{{\mathfrak{K}}}
\nc{\fL}{{\mathfrak{L}}}
\nc{\fM}{{\mathfrak{M}}}
\nc{\fN}{{\mathfrak{N}}}
\nc{\fO}{{\mathfrak{O}}}
\nc{\fP}{{\mathfrak{P}}}
\nc{\fQ}{{\mathfrak{Q}}}
\nc{\fR}{{\mathfrak{R}}}
\nc{\fS}{{\mathfrak{S}}}
\nc{\fT}{{\mathfrak{T}}}
\nc{\fU}{{\mathfrak{U}}}
\nc{\fV}{{\mathfrak{V}}}
\nc{\fW}{{\mathfrak{W}}}
\nc{\fX}{{\mathfrak{X}}}
\nc{\fY}{{\mathfrak{Y}}}
\nc{\fZ}{{\mathfrak{Z}}}
\nc{\fa}{{\mathfrak{a}}}
\nc{\fb}{{\mathfrak{b}}}
\nc{\fc}{{\mathfrak{c}}}
\nc{\fd}{{\mathfrak{d}}}
\nc{\fe}{{\mathfrak{e}}}
\nc{\ff}{{\mathfrak{f}}}
\nc{\fg}{{\mathfrak{g}}}
\nc{\fh}{{\mathfrak{h}}}
\nc{\fj}{{\mathfrak{j}}}
\nc{\fk}{{\mathfrak{k}}}
\nc{\fl}{{\mathfrak{l}}}
\nc{\fm}{{\mathfrak{m}}}
\nc{\fn}{{\mathfrak{n}}}
\nc{\fo}{{\mathfrak{o}}}
\nc{\fp}{{\mathfrak{p}}}
\nc{\fq}{{\mathfrak{q}}}
\nc{\fr}{{\mathfrak{r}}}
\nc{\fs}{{\mathfrak{s}}}
\nc{\ft}{{\mathfrak{t}}}
\nc{\fu}{{\mathfrak{u}}}
\nc{\fv}{{\mathfrak{v}}}
\nc{\fw}{{\mathfrak{w}}}
\nc{\fx}{{\mathfrak{x}}}
\nc{\fy}{{\mathfrak{y}}}
\nc{\fz}{{\mathfrak{z}}}
\nc{\sA}{{\mathsf{A}}}
\nc{\sB}{{\mathsf{B}}}
\nc{\sC}{{\mathsf{C}}}
\nc{\sD}{{\mathsf{D}}}
\nc{\sE}{{\mathsf{E}}}
\nc{\sF}{{\mathsf{\rF}}}
\nc{\sG}{{\mathsf{G}}}
\nc{\sH}{{\mathsf{H}}}
\nc{\sI}{{\mathsf{I}}}
\nc{\sJ}{{\mathsf{J}}}
\nc{\sK}{{\mathsf{K}}}
\nc{\sL}{{\mathsf{L}}}
\nc{\sM}{{\mathsf{M}}}
\nc{\sN}{{\mathsf{N}}}
\nc{\sO}{{\mathsf{O}}}
\nc{\sP}{{\mathsf{P}}}
\nc{\sQ}{{\mathsf{Q}}}
\nc{\sR}{{\mathsf{R}}}
\nc{\sS}{{\mathsf{S}}}
\nc{\sT}{{\mathsf{T}}}
\nc{\sU}{{\mathsf{U}}}
\nc{\sV}{{\mathsf{V}}}
\nc{\sW}{{\mathsf{W}}}
\nc{\sX}{{\mathsf{X}}}
\nc{\sY}{{\mathsf{Y}}}
\nc{\sZ}{{\mathsf{Z}}}
\nc{\sa}{{\mathsf{a}}}
\nc{\sd}{{\mathsf{d}}}
\nc{\se}{{\mathsf{e}}}
\nc{\sg}{{\mathsf{g}}}
\nc{\sh}{{\mathsf{h}}}
\nc{\si}{{\mathsf{i}}}
\nc{\sj}{{\mathsf{j}}}
\nc{\sk}{{\mathsf{k}}}
\nc{\sm}{{\mathsf{m}}}
\nc{\sn}{{\mathsf{n}}}
\nc{\so}{{\mathsf{o}}}
\nc{\sq}{{\mathsf{q}}}
\nc{\sr}{{\mathsf{r}}}
\nc{\st}{{\mathsf{t}}}
\nc{\su}{{\mathsf{u}}}
\nc{\sv}{{\mathsf{v}}}
\nc{\sw}{{\mathsf{w}}}
\nc{\sx}{{\mathsf{x}}}
\nc{\sy}{{\mathsf{y}}}
\nc{\sz}{{\mathsf{z}}}
\nc{\oA}{{\overline{A}}}
\nc{\oB}{{\overline{B}}}
\nc{\oC}{{\overline{C}}}
\nc{\oD}{{\overline{D}}}
\nc{\oE}{{\overline{E}}}
\nc{\oF}{{\overline{\rF}}}
\nc{\oG}{{\overline{G}}}
\nc{\oH}{{\overline{H}}}
\nc{\oI}{{\overline{I}}}
\nc{\oJ}{{\overline{J}}}
\nc{\oK}{{\overline{K}}}
\nc{\oL}{{\overline{L}}}
\nc{\oM}{{\overline{M}}}
\nc{\oN}{{\overline{N}}}
\nc{\oO}{{\overline{O}}}
\nc{\oP}{{\overline{P}}}
\nc{\oQ}{{\overline{Q}}}
\nc{\oR}{{\overline{R}}}
\nc{\oS}{{\overline{S}}}
\nc{\oT}{{\overline{T}}}
\nc{\oU}{{\overline{U}}}
\nc{\oV}{{\overline{V}}}
\nc{\oW}{{\overline{W}}}
\nc{\oX}{{\overline{X}}}
\nc{\oY}{{\overline{Y}}}
\nc{\oZ}{{\overline{Z}}}
\nc{\oa}{{\overline{a}}}
\nc{\ob}{{\overline{b}}}
\nc{\oc}{{\overline{c}}}
\nc{\od}{{\overline{d}}}
\nc{\of}{{\overline{f}}}
\nc{\og}{{\overline{g}}}
\nc{\oh}{{\overline{h}}}
\nc{\oi}{{\overline{i}}}
\nc{\oj}{{\overline{j}}}
\nc{\ok}{{\overline{k}}}
\nc{\ol}{{\overline{l}}}
\nc{\om}{{\overline{m}}}
\nc{\on}{{\overline{n}}}
\nc{\oo}{{\overline{o}}}
\nc{\op}{{\overline{p}}}
\nc{\oq}{{\overline{q}}}
\nc{\os}{{\overline{s}}}
\nc{\ot}{{\overline{t}}}
\nc{\ou}{{\overline{u}}}
\nc{\ov}{{\overline{v}}}
\nc{\ow}{{\overline{w}}}
\nc{\ox}{{\overline{x}}}
\nc{\oy}{{\overline{y}}}
\nc{\oz}{{\overline{z}}}
\nc{\tA}{{\tilde{A}}}
\nc{\tB}{{\tilde{B}}}
\nc{\tC}{{\tilde{C}}}
\nc{\tD}{{\tilde{D}}}
\nc{\tE}{{\tilde{E}}}
\nc{\tF}{{\tilde{\rF}}}
\nc{\tG}{{\tilde{G}}}
\nc{\tH}{{\tilde{H}}}
\nc{\tI}{{\tilde{I}}}
\nc{\tJ}{{\tilde{J}}}
\nc{\tK}{{\tilde{K}}}
\nc{\tL}{{\tilde{L}}}
\nc{\tM}{{\tilde{M}}}
\nc{\tN}{{\tilde{N}}}
\nc{\tO}{{\tilde{O}}}
\nc{\tP}{{\tilde{P}}}
\nc{\tQ}{{\tilde{Q}}}
\nc{\tR}{{\tilde{R}}}
\nc{\tS}{{\tilde{S}}}
\nc{\tT}{{\tilde{T}}}
\nc{\tU}{{\tilde{U}}}
\nc{\tV}{{\tilde{V}}}
\nc{\tW}{{\tilde{W}}}
\nc{\tX}{{\tilde{X}}}
\nc{\tY}{{\tilde{Y}}}
\nc{\tZ}{{\tilde{Z}}}
\nc{\ta}{{\tilde{a}}}
\nc{\tb}{{\tilde{b}}}
\nc{\tc}{{\tilde{c}}}
\nc{\td}{{\tilde{d}}}
\nc{\te}{{\tilde{e}}}
\nc{\tf}{{\tilde{f}}}
\nc{\tg}{{\tilde{g}}}
\nc{\ti}{{\tilde{i}}}
\nc{\tj}{{\tilde{j}}}
\nc{\tk}{{\tilde{k}}}
\nc{\tl}{{\tilde{l}}}
\nc{\tm}{{\tilde{m}}}
\nc{\tn}{{\tilde{n}}}
\nc{\tp}{{\tilde{p}}}
\nc{\tq}{{\tilde{q}}}
\nc{\tr}{{\tilde{r}}}
\nc{\ts}{{\tilde{s}}}
\nc{\tu}{{\tilde{u}}}
\nc{\tv}{{\tilde{v}}}
\nc{\tw}{{\tilde{w}}}
\nc{\tx}{{\tilde{x}}}
\nc{\ty}{{\tilde{y}}}
\nc{\tz}{{\tilde{z}}}
\nc{\hA}{{\hat{A}}}
\nc{\hB}{{\hat{B}}}
\nc{\hC}{{\hat{C}}}
\nc{\hD}{{\hat{D}}}
\nc{\hE}{{\hat{E}}}
\nc{\hF}{{\hat{\rF}}}
\nc{\hG}{{\hat{G}}}
\nc{\hH}{{\hat{H}}}
\nc{\hI}{{\hat{I}}}
\nc{\hJ}{{\hat{J}}}
\nc{\hK}{{\hat{K}}}
\nc{\hL}{{\hat{L}}}
\nc{\hM}{{\hat{M}}}
\nc{\hN}{{\hat{N}}}
\nc{\hO}{{\hat{O}}}
\nc{\hP}{{\hat{P}}}
\nc{\hQ}{{\hat{Q}}}
\nc{\hR}{{\hat{R}}}
\nc{\hS}{{\hat{S}}}
\nc{\hT}{{\hat{T}}}
\nc{\hU}{{\hat{U}}}
\nc{\hV}{{\hat{V}}}
\nc{\hW}{{\hat{W}}}
\nc{\hX}{{\hat{X}}}
\nc{\hY}{{\hat{Y}}}
\nc{\hZ}{{\hat{Z}}}
\nc{\ha}{{\hat{a}}}
\nc{\hb}{{\hat{b}}}
\nc{\hc}{{\hat{c}}}
\nc{\hd}{{\hat{d}}}
\nc{\he}{{\hat{e}}}
\nc{\hf}{{\hat{f}}}
\nc{\hg}{{\hat{g}}}
\nc{\hh}{{\hat{h}}}
\nc{\hi}{{\hat{i}}}
\nc{\hj}{{\hat{j}}}
\nc{\hk}{{\hat{k}}}
\nc{\hl}{{\hat{l}}}
\nc{\hm}{{\hat{m}}}
\nc{\hn}{{\hat{n}}}
\nc{\ho}{{\hat{o}}}
\nc{\hp}{{\hat{p}}}
\nc{\hq}{{\hat{q}}}
\nc{\hr}{{\hat{r}}}
\nc{\hs}{{\hat{s}}}
\nc{\hu}{{\hat{u}}}
\nc{\hv}{{\hat{v}}}
\nc{\hw}{{\hat{w}}}
\nc{\hx}{{\hat{x}}}
\nc{\hy}{{\hat{y}}}
\nc{\hz}{{\hat{z}}}
\nc{\rF}{{\mathrm{F}}}
\nc{\rG}{{\mathrm{G}}}
\nc{\rQ}{{\mathrm{Q}}}
\nc{\eps}{\varepsilon}
\nc{\lan}{\big\langle}
\nc{\ran}{\big\rangle}
\nc{\kk}{{\mathsf{k}}}
\renewcommand{\P}{\PP}
\def\bw#1#2{\textstyle{\bigwedge\hskip-0.9mm^{#1}}\hskip0.2mm{#2}}
\DeclareMathOperator{\Ext}{\mathrm{Ext}}
\DeclareMathOperator{\Pic}{\mathrm{Pic}}
\DeclareMathOperator{\Gr}{\mathrm{Gr}}
\theoremstyle{plain}
\newtheorem{theorem}{Theorem}
\newtheorem{lemma}[theorem]{Lemma}
\newtheorem{proposition}[theorem]{Proposition}
\newtheorem{corollary}[theorem]{Corollary}
\theoremstyle{definition}
\theoremstyle{remark}
\newtheorem{remark}[theorem]{Remark}
\title[Derived equivalence of Ito--Miura--Okawa--Ueda Calabi--Yau 3-folds]{Derived equivalence of Ito--Miura--Okawa--Ueda\\[2ex]Calabi--Yau 3-folds}
\author{Alexander Kuznetsov}
\address{{\sloppy
\parbox{0.9\textwidth}{
Algebraic Geometry Section, Steklov Mathematical Institute of Russian Academy of Sciences,\\
8 Gubkin str., Moscow 119991 Russia
\\[5pt]
The Poncelet Laboratory, Independent University of Moscow
\hfill\\[5pt]
Laboratory of Algebraic Geometry, National Research University Higher School of Economics, Russian Federation
}\bigskip}}
\email{akuznet@mi.ras.ru}
\date{}
\thanks{I was partially supported by the Russian Academic Excellence Project ``5-100'', by RFBR grants 14-01-00416, 15-01-02164, 15-51-50045, and by the Simons foundation.}
\begin{document}

\begin{abstract}
We prove derived equivalence of Calabi--Yau threefolds constructed by Ito--Miura--Okawa--Ueda as an example of non-birational Calabi--Yau varieties 
whose difference in the Grothendieck ring of varieties is annihilated by the affine line.
\end{abstract}

\maketitle


In a recent paper~\cite{IMOU} there was constructed a pair of Calabi--Yau threefolds $X$ and $Y$ such that their classes $[X]$ and $[Y]$ in the Grothendieck group of varieties are different, but
\begin{equation*}
([X] - [Y])[\AA^1] = 0.
\end{equation*}
The goal of this short note is to show that these threefolds are derived equivalent
\begin{equation*}
\bD(X) \cong \bD(Y).
\end{equation*}
In course of proof we will construct an explicit equivalence of the categories.

We denote by $\kk$ the base field. 
All the functors between triangulated categories are implicitly derived.

\medskip

As explained in~\cite{IMOU} the threefolds $X$ and $Y$ are related by the following diagram
\begin{equation*}
\xymatrix{
& D \ar@{^{(}->}[r]^-i \ar[ddl]_p & M \ar[d] \ar[ddl]_{\pi_M} \ar[ddr]^{\rho_M} & E \ar@{_{(}->}[l]_-j \ar[ddr]^q \\
&& \rF \ar[dl]^\pi \ar[dr]_\rho \\
X  \ar@{^{(}->}[r] & \rQ && \rG & Y \ar@{_{(}->}[l] 
}
\end{equation*}
Here 
\begin{itemize}
\item 
$\rF$ is the flag variety of the simple algebraic group of type $\sG_2$, 
\item 
$\rQ$ and $\rG$ are the Grassmannians of this group:
\begin{itemize}
\item 
$\rQ$ is a 5-dimensional quadric in $\P(V)$, where $V$ is the 7-dimensional fundamental representation, and 
\item 
$\rG = \Gr(2,V) \cap \P(W)$, where $W \subset \bw2V$ is the 14-dimensional adjoint representation
(this intersection is not dimensionally transverse!),
\end{itemize}
\item 
$\pi \colon \rF \to \rQ$ and $\rho  \colon \rF \to \rG$ are Zariski locally trivial $\P^1$-fibrations,
\item 
$M$ is a smooth half-anticanonical divisor in $\rF$,
\item 
$\pi_M := \pi\vert_M \colon M \to \rQ$ is the blowup with center in the Calabi--Yau threefold $X$,
\item 
$\rho_M := \rho\vert_M \colon M \to \rG$ is the blowup with center in the Calabi--Yau threefold $Y$,
\item 
$D$ and $E$ are the exceptional divisors of the blowups,
\item 
$p := \pi\vert_D \colon D \to X$ and $q := \rho\vert_E \colon E \to Y$ are the contractions.
\end{itemize}
We denote by $h$ and $H$ the hyperplane classes of $\rQ$ and $\rG$, as well as their pullbacks to $\rF$ and $M$.
Then $h$ and $H$ form a basis of $\Pic(\rF)$ in which the canonical classes can be expressed as follows:
\begin{equation}\label{eq:k}
K_\rQ = -5h,
\qquad 
K_\rG = -3H,
\qquad
K_\rF = -2H - 2h,
\qquad
K_M = -H - h.
\end{equation} 
The classes $h$ and $H$ are relative hyperplane classes for the $\P^1$-fibrations $\rho \colon \rF \to \rG$ and $\pi \colon \rF \to \rQ$ respectively.
We define rank 2 vector bundles $\cK$ and $\cU$ on $\rQ$ and $\rG$ respectively by
\begin{equation}\label{eq:pushforwards-h-H}
\pi_*\cO_F(H) \cong \cK^\vee,
\qquad 
\rho_*\cO_F(h) \cong \cU^\vee.
\end{equation} 
Then
\begin{equation*}
\P_\rQ(\cK) \cong \rF \cong \P_\rG(\cU).
\end{equation*}

It follows from~\eqref{eq:pushforwards-h-H} that $X \subset \rQ$ is the zero locus of a section of the vector bundle $\cK^\vee(h)$ on $\rQ$ and $Y \subset \rG$ is the zero locus of a section of the vector bundle $\cU^\vee(H)$ on $\rG$.


Since $H$ and $h$ are relative hyperplane classes for $\rF = \P_\rQ(\cK)$ and $\rF = \P_\rG(\cU)$ respectively, we have on $\rF$ exact sequences
\begin{equation*}
0 \to \omega_{\rF/\rQ} \to \cK^\vee(-H) \to \cO_\rF \to 0,
\qquad
0 \to \omega_{\rF/\rG} \to \cU^\vee(-h) \to \cO_\rF \to 0.
\end{equation*} 
By~\eqref{eq:k} we have $\omega_{\rF/\rQ} \cong \cO_\rF(3h-2H)$ and $\omega_{\rF/\rG} \cong \cO_\rF(H-2h)$. 
Taking the determinants of the above sequences and dualizing, we deduce 
\begin{equation}\label{eq:c1-ku}
\det(\cK) \cong \cO_\rQ(-3h),
\qquad 
\det(\cU) \cong \cO_\rG(-H).
\end{equation}
Furthermore, twisting the sequences by $\cO_\rF(H)$ and $\cO_\rF(h)$ respectively, we obtain 
\begin{equation}\label{eq:seq-k}
0 \to \cO_\rF(3h-H) \to \cK^\vee \to \cO_\rF(H) \to 0,
\end{equation} 
and
\begin{equation}\label{eq:seq-u}
0 \to \cO_\rF(H-h) \to \cU^\vee \to \cO_\rF(h) \to 0.
\end{equation}

Derived categories of both $\rQ$ and $\rG$ are known to be generated by exceptional collections.
In fact, for our purposes the most convenient collections are 
\begin{equation}\label{eq:d-q}
\bD(\rQ) = \langle \cO_\rQ(-3h), \cO_\rQ(-2h), \cO_\rQ(-h), \cS, \cO_\rQ, \cO_\rQ(h) \rangle,
\end{equation}
where $\cS$ is the spinor vector bundle of rank 4, see~\cite{Kap}, and
\begin{equation}\label{eq:d-g}
\bD(\rG) = \langle \cO_\rG(-H),\cU,\cO_\rG,\cU^\vee,\cO_\rG(H),\cU^\vee(H) \rangle.
\end{equation} 
This collection is obtained from the collection of~\cite[Section~6.4]{K} by a twist (note that $\cU \cong \cU^\vee(-H)$ by~\eqref{eq:c1-ku}).
In fact, for the argument below one even does not need to know that this exceptional collection is full; on a contrary, one can use the argument to prove its fullness, see Remark~\ref{remark:full}.

Using two blowup representations of $M$ and the corresponding semiorthogonal decompositions 
\begin{equation}\label{eq:d-m}
\langle \pi_M^*(\bD(\rQ)), i_*p^*(\bD(X)) \rangle = \bD(M) = \langle \rho_M^*(\bD(\rG)), j_*q^*(\bD(Y) \rangle
\end{equation}
together with the above exceptional collections, we see that $\bD(X)$ and $\bD(Y)$ are the complements in~$\bD(M)$ of exceptional collections of length 6, so one can guess they are equivalent.
Below we show that this is the case by constructing a sequence of mutations transforming one exceptional collection to the other.

We start with some cohomology computations:

\begin{lemma}\label{lemma:cohomology-f}
$(i)$ Line bundles $\cO_\rF(th-H)$ and $\cO_\rF(tH-h)$ are acyclic for all $t \in \ZZ$.

\noindent$(ii)$ Line bundles $\cO_\rF(-2H)$ and $\cO_\rF(2h-2H)$ are acyclic and 
\begin{equation*}
H^\bullet(\rF,\cO_\rF(3h-2H)) = \kk[-1].
\end{equation*}

\noindent$(iii)$ Vector bundles $\cU(-2H)$, $\cU(-H)$, $\cU(h-H)$, and $\cU \otimes \cU(-H)$ on $\rF$ are acyclic and
%
\begin{equation*}
H^\bullet(\rF,\cU(h)) = \kk,
\qquad 
H^\bullet(\rF,\cU \otimes \cU(h)) \cong \kk[-1].
\end{equation*}
\end{lemma}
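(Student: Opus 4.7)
\emph{Proof proposal.} The plan is to reduce every cohomology computation on $\rF$ to line-bundle cohomology on one of the two Grassmannians $\rQ$ or $\rG$ by pushing forward along the $\P^1$-fibration $\pi$ or $\rho$. The necessary pushforward formulas along $\pi$, obtained from relative Serre duality using $\omega_{\rF/\rQ} = \cO_\rF(3h - 2H)$, $\det\cK = \cO_\rQ(-3h)$, and $\pi_*\cO_\rF(H) = \cK^\vee$, are
\begin{equation*}
R\pi_*\cO_\rF(-H) = 0, \quad R\pi_*\cO_\rF(-2H) = \cO_\rQ(-3h)[-1], \quad R\pi_*\cO_\rF(-3H) = \cK(-3h)[-1],
\end{equation*}
together with the symmetric formulas along $\rho$. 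For $\cU$-twisted bundles, I would use the dual of~\eqref{eq:seq-u}, namely $0 \to \cO_\rF(-h) \to \cU \to \cO_\rF(h - H) \to 0$, to reduce to line-bundle computations.

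Parts $(i)$ and $(ii)$ are immediate in this framework: for $(i)$, the decomposition $\cO_\rF(th - H) = \pi^*\cO_\rQ(th) \otimes \cO_\rF(-H)$ pushes to zero along $\pi$, and the symmetric argument along $\rho$ handles $\cO_\rF(tH - h)$; for $(ii)$, $R\pi_*$ sends the three line bundles in question to $\cO_\rQ(-3h)[-1]$, $\cO_\rQ(-h)[-1]$, and $\cO_\rQ[-1]$, whose cohomologies on the Fano quadric $\rQ$ of index~$5$ are $0$, $0$, and $\kk$ by Kodaira vanishing.

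For $(iii)$, I would apply the filtration of $\cU$ to each bundle and then push along $\pi$. The cases $\cU(-H)$ and $\cU(h-H)$ reduce to pairs of line bundles that are either acyclic by $(i)$ or push down to acyclic line bundles on $\rQ$ via Kodaira; the case $\cU(h)$ reduces to the extension $0 \to \cO_\rF \to \cU(h) \to \cO_\rF(2h - H) \to 0$ with the second term acyclic ($R\pi_*\cO_\rF(2h - H) = 0$) and the first contributing $\kk$. For the rank~$4$ cases $\cU \otimes \cU(-H)$ and $\cU \otimes \cU(h)$ I would filter twice; the latter yields $0 \to \cU \to \cU \otimes \cU(h) \to \cU(2h - H) \to 0$, in which $\cU$ is acyclic (by a further filtration using $(i)$ and $R\rho_*\cO_\rF(-h) = 0$) and $H^\bullet(\rF, \cU(2h - H)) = \kk[-1]$ via the third line of $(ii)$.

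The main obstacle will be two stubborn line bundles, $\cO_\rF(h - 3H)$ and $\cO_\rF(2h - 3H)$, arising in the filtrations of $\cU(-2H)$ and of $\cU(h - 2H) \subset \cU \otimes \cU(-H)$ respectively. Pushing along $\pi$ produces the rank~$2$ bundles $\cK(-2h)$ and $\cK(-h)$ on $\rQ$, which are not in the exceptional collection~\eqref{eq:d-q}, so Kodaira vanishing is no longer available. I would handle both uniformly by pushing along $\rho$ instead:
\begin{equation*}
R\rho_*\cO_\rF(h - 3H) = \cU^\vee(-3H), \qquad R\rho_*\cO_\rF(2h - 3H) = S^2\cU^\vee(-3H),
\end{equation*}
and Serre duality on $\rG$ (with $\omega_\rG = \cO_\rG(-3H)$) identifies their cohomologies with $H^\bullet(\rG, \cU)^\vee$ and $H^\bullet(\rG, S^2\cU)^\vee$ respectively. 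The first vanishes by semiorthogonality of~\eqref{eq:d-g} (namely $\RHom_\rG(\cO_\rG, \cU) = 0$); the second vanishes because $\cU \otimes \cU = S^2\cU \oplus \det\cU = S^2\cU \oplus \cO_\rG(-H)$, with $H^\bullet(\rG, \cU \otimes \cU) = \RHom_\rG(\cU^\vee, \cU) = 0$ and $H^\bullet(\rG, \cO_\rG(-H)) = 0$ both following from~\eqref{eq:d-g}. This is the one place where the exceptional collection on $\rG$ is truly essential; the rest is bookkeeping with the filtration of $\cU$ and Kodaira vanishing on the quadric.
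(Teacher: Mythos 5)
Your proposal is correct, and parts $(i)$ and $(ii)$ coincide with the paper's argument (pushforward along $\pi$ or $\rho$ kills $\cO_\rF(th-H)$ and $\cO_\rF(tH-h)$, and $R\pi_*\cO_\rF(-2H)\cong\cO_\rQ(-3h)[-1]$ reduces $(ii)$ to line bundles on the quadric). Where you genuinely diverge is part $(iii)$. The paper does not filter $\cU$ at all for the acyclicity statements: it pushes $\cU(-2H)$, $\cU(-H)$, $\cU(h-H)$, $\cU\otimes\cU(-H)$ down to $\rG$ in one step, obtaining $\cU(-2H)$, $\cU(-H)$, $\cU\otimes\cU^\vee(-H)$, $\cU\otimes\cU(-H)$, and observes that these four cohomologies are exactly the four $\Ext$-spaces expressing orthogonality of $\cU^\vee(H)$ to the block $(\cO_\rG(-H),\cU,\cO_\rG,\cU^\vee)$ in~\eqref{eq:d-g}; similarly $H^\bullet(\cU(h))=\kk$ is just exceptionality of $\cU$. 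Only for $\cU\otimes\cU(h)$ does it resort to the line-bundle filtration, as you do. Your route instead filters everything into line bundles and ships each factor to whichever Grassmannian is convenient, which is more elementary in spirit but costs you the two ``stubborn'' factors $\cO_\rF(h-3H)$ and $\cO_\rF(2h-3H)$; your treatment of these via $R\rho_*$, Serre duality on $\rG$, and $H^\bullet(\rG,\cU)=H^\bullet(\rG,S^2\cU)=0$ is correct, but note two small points: the splitting $\cU\otimes\cU\cong S^2\cU\oplus\det\cU$ needs $\mathrm{char}\,\kk\neq 2$ (the paper's wholesale pushforward avoids $S^2\cU$ entirely, since $\cU\otimes\cU(-H)\cong\cU\otimes\cU^\vee(-2H)$ is directly an orthogonality statement), and your appeals to Kodaira vanishing on $\rQ$ can be replaced by the semiorthogonality of $\cO_\rQ(-3h),\dots,\cO_\rQ(h)$ in~\eqref{eq:d-q}, keeping the whole proof internal to the two exceptional collections as in the paper. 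Net effect: the paper's version is shorter and uses~\eqref{eq:d-g} as the single engine; yours localizes the use of~\eqref{eq:d-g} to two line bundles at the price of more bookkeeping.
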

\begin{proof}
Part $(i)$ is easy since $\pi_*\cO_\rF(-H) = 0$ and $\rho_*\cO_\rF(-h) = 0$.
For part $(ii)$ we note that
\begin{equation}\label{eq:pi-2H}
\pi_*\cO_\rF(-2H) \cong (\det\cK)[-1] \cong \cO_\rQ(-3h)[-1], 
\end{equation} 
so acyclicity of $\cO_\rF(th-2H)$ for $-1 \le t \le 2$ and the formula for the cohomology of $\cO_\rF(3h-2H)$ follow.
For part $(iii)$ we push forward the bundles $\cU(-2H)$, $\cU(-H)$, $\cU(h-H)$, and $\cU \otimes \cU(-H)$ to $\rG$ and applying~\eqref{eq:pushforwards-h-H} we obtain 
\begin{equation*}
\cU(-2H),\ \cU(-H),\ \cU \otimes \cU^\vee(-H),\ \cU \otimes \cU(-H).
\end{equation*}
Their acyclicity follows from orthogonality of $\cU^\vee(H)$ to the collection $(\cO_\rG(-H),\cU,\cO_\rQ,\cU^\vee)$ in view of the exceptional collection~\eqref{eq:d-g}.
Analogously,  pushing forward $\cU(h)$ to $\rG$ we obtain $\cU \otimes \cU^\vee$, and its cohomology is $\kk$ since $\cU$ is exceptional.
Finally, using~\eqref{eq:seq-u} we see that $\cU \otimes \cU(h)$ has a filtration with factors $\cO_\rF(-h)$, $\cO_\rF(h-H)$, and $\cO_\rF(3h-2H)$.
The first two are acyclic by part~$(i)$ and the last one has cohomology~$\kk[-1]$ by part~$(ii)$.
It follows that the cohomology of $\cU \otimes \cU(h)$ is also $\kk[-1]$.
\end{proof}

\begin{corollary}\label{corollary:cohomology-m}
The following line and vector bundles are acyclic on $M$:
\begin{equation*}
\cO_M(h-H),\ \cO_M(3h-H),\ \cU(h-H).
\end{equation*}
Moreover,
\begin{equation*}
H^\bullet(M,\cU(h)) = \kk,
\qquad 
H^\bullet(M,\cU \otimes \cU(h)) = \kk[-1].
\end{equation*}
\end{corollary}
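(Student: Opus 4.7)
\medskip
\noindent\textbf{Proof proposal.}
Since $-K_\rF = 2H+2h$ by~\eqref{eq:k}, a half-anticanonical divisor $M \subset \rF$ has class $H+h$. The plan is simply to tensor the structure sequence
\begin{equation*}
0 \to \cO_\rF(-H-h) \to \cO_\rF \to \cO_M \to 0
\end{equation*}
with each bundle lifted from $M$ to $\rF$ and deduce every cohomological statement from Lemma~\ref{lemma:cohomology-f}, whose inventory is tailored to match exactly the bundles produced this way.

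For instance, to show that $\cO_M(h-H)$ is acyclic I would tensor with $\cO_\rF(h-H)$ and note that both $\cO_\rF(h-H)$ and $\cO_\rF(-2H) = \cO_\rF(h-H)(-H-h)$ are acyclic by parts~$(i)$ and~$(ii)$ of the lemma. In the same way, the acyclicity of $\cO_M(3h-H)$ reduces to the acyclicity on $\rF$ of $\cO_\rF(3h-H)$ and $\cO_\rF(2h-2H)$, again supplied by parts~$(i)$ and~$(ii)$. The bundle $\cU(h-H)|_M$ is handled by tensoring with $\cU(h-H)$ on $\rF$, reducing to the acyclicity of $\cU(h-H)$ and $\cU(-2H)$, both guaranteed by part~$(iii)$.

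For the two bundles with nontrivial cohomology, I would apply the same recipe. Tensoring with $\cU(h)$ yields the sequence
\begin{equation*}
0 \to \cU(-H) \to \cU(h) \to \cU(h)|_M \to 0,
\end{equation*}
whose left term is acyclic and whose middle term has cohomology $\kk$ (both from part~$(iii)$), so $H^\bullet(M,\cU(h))=\kk$. Similarly, tensoring with $\cU\otimes\cU(h)$ produces
\begin{equation*}
0 \to \cU\otimes\cU(-H) \to \cU\otimes\cU(h) \to (\cU\otimes\cU(h))|_M \to 0,
\end{equation*}
in which the left term is acyclic and the middle has cohomology $\kk[-1]$, giving $H^\bullet(M,\cU\otimes\cU(h))=\kk[-1]$.

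There is no real obstacle here; the content of the corollary is entirely encoded in the choice of bundles appearing in Lemma~\ref{lemma:cohomology-f}. The only thing to double-check carefully is the bookkeeping of twists, that is, that for each bundle in the corollary statement both its lift to $\rF$ and that lift twisted by $\cO_\rF(-H-h)$ appear (with the correct cohomology) in the lemma.
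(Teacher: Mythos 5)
Your proposal is correct and is exactly the paper's argument: tensor the divisor sequence $0 \to \cO_\rF(-h-H) \to \cO_\rF \to \cO_M \to 0$ with each bundle and read off everything from Lemma~\ref{lemma:cohomology-f}. You have merely spelled out the twist bookkeeping that the paper leaves implicit, and all of it checks out.
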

\begin{proof}
Since $M \subset \rF$ is a divisor with class $h + H$ we have a resolution
\begin{equation*}
0 \to \cO_\rF(-h-H) \to \cO_\rF \to \cO_M \to 0.
\end{equation*}
Tensoring it with the required bundles and using the Lemma~\ref{lemma:cohomology-f} we obtain the required results.
\end{proof}

\begin{proposition}
We have an exact sequence on $\rF$ and $M$:
\begin{equation}\label{eq:seq-s}
0 \to \cU \to  \cS' \to \cU^\vee(-h) \to  0,
\end{equation} 
where $\cS'$ is \textup{(}the pullback to $\rF$ or $M$ of\textup{)} a rank $4$ vector bundle on $\rQ$.
\end{proposition}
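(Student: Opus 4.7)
The plan is to construct $\cS'$ on $\rF$ as the unique nontrivial extension of $\cU^\vee(-h)$ by $\cU$, and then verify that this extension is pulled back along $\pi\colon \rF \to \rQ$ from a rank~$4$ vector bundle on $\rQ$; the case of $M$ follows by restriction.

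By Lemma~\ref{lemma:cohomology-f}(iii) we have $\Ext^1_\rF(\cU^\vee(-h), \cU) = H^1(\rF, \cU \otimes \cU(h)) \cong \kk$, so I take $\cS'$ to be the (up to scalar) unique nontrivial extension; rank~$4$ and exactness of~\eqref{eq:seq-s} on $\rF$ are then immediate. The content to verify is descent to $\rQ$.

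The key computation is $R\pi_*(\cU \otimes \cU(h))$. Tensoring~\eqref{eq:seq-u} with $\cU(h)$ yields a short exact sequence $0 \to \cU \to \cU \otimes \cU(h) \to \cU(2h-H) \to 0$; using $R\pi_*\cO_\rF(-h) \cong \cO_\rQ(-h)$, $R\pi_*\cO_\rF(h-H) = 0$, and $R\pi_*\cO_\rF(3h-2H) \cong \cO_\rQ[-1]$ (the last from~\eqref{eq:pi-2H}), I find $R\pi_*\cU \cong \cO_\rQ(-h)$ and $R\pi_*\cU(2h-H) \cong \cO_\rQ[-1]$. The long exact sequence then gives $\pi_*(\cU \otimes \cU(h)) \cong \cO_\rQ(-h)$ and $R^1\pi_*(\cU \otimes \cU(h)) \cong \cO_\rQ$. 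Since $H^1(\rQ, \cO_\rQ(-h)) = 0$ on the $5$-dimensional quadric, Leray identifies $H^1(\rF, \cU \otimes \cU(h)) \cong H^0(\rQ, \cO_\rQ) = \kk$, so the nonzero extension class corresponds to a nowhere vanishing global section of $\cO_\rQ$, and by base change its restriction to each fiber $F = \pi^{-1}(q) \cong \P^1$ of $\pi$ is nonzero.

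Restricting~\eqref{eq:seq-u} to such a fiber yields $\cU|_F \cong \cO \oplus \cO(-1)$ and $\cU^\vee(-h)|_F \cong \cO(1) \oplus \cO$; a direct check shows $\Ext^1(\cU^\vee(-h)|_F, \cU|_F) \cong \kk$, with unique nontrivial extension isomorphic to $\cO^{\oplus 4}$. Hence $\cS'|_F \cong \cO^{\oplus 4}$ on every fiber, so cohomology and base change yield $R^1\pi_*\cS' = 0$ and an isomorphism $\pi^*\pi_*\cS' \xrightarrow{\sim} \cS'$, with $\pi_*\cS'$ the desired rank~$4$ vector bundle on $\rQ$. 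Restricting~\eqref{eq:seq-s} along $i\colon M \hookrightarrow \rF$ then yields the sequence on $M$. The main technical step is the identification $R^1\pi_*(\cU \otimes \cU(h)) \cong \cO_\rQ$ together with tracing the extension class through Leray to its fiberwise behavior; once that is done, fiberwise triviality of $\cS'$ and hence descent along $\pi$ are automatic.
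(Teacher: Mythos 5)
Your proof is correct, but it takes a genuinely different route from the paper's. The paper also starts from the generator of $\Ext^1(\cU^\vee(-h),\cU)\cong\kk$, but then identifies the extension concretely: using the two sequences obtained from~\eqref{eq:seq-u} by dualizing and twisting, it shows the class is induced from $\Ext^1(\cO_\rF(H-2h),\cO_\rF(h-H))\cong\kk$, whose nontrivial extension is a twist of $\cK^\vee$ by~\eqref{eq:seq-k}; hence $\cS'$ acquires a three-step filtration whose graded pieces are all pullbacks from $\rQ$, and descent follows from the fact that $\pi^*(\bD(\rQ))$ is a triangulated (in particular extension-closed) subcategory of $\bD(\rF)$. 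You instead work relatively over $\rQ$: you compute $R\pi_*(\cU\otimes\cU(h))$, use the Leray edge map to show the extension class restricts nontrivially to every fiber of $\pi$, check by hand that the unique nontrivial extension of $\cO(1)\oplus\cO$ by $\cO\oplus\cO(-1)$ on $\P^1$ is $\cO^{\oplus 4}$, and then invoke cohomology and base change to get $\cS'\cong\pi^*\pi_*\cS'$. All the individual computations check out ($R\pi_*\cO_\rF(-h)\cong\cO_\rQ(-h)$, $R\pi_*\cO_\rF(h-H)=0$, $R\pi_*\cO_\rF(3h-2H)\cong\cO_\rQ[-1]$, and the resulting $R^1\pi_*(\cU\otimes\cU(h))\cong\cO_\rQ$ with vanishing $H^1$ and $H^2$ of $\cO_\rQ(-h)$). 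The only cosmetic slip is the phrase ``tensoring~\eqref{eq:seq-u} with $\cU(h)$'': literally that gives $0\to\cU(H)\to\cU^\vee\otimes\cU(h)\to\cU(2h)\to 0$; what you actually use is the dual sequence $0\to\cO_\rF(-h)\to\cU\to\cO_\rF(h-H)\to 0$ tensored with $\cU(h)$, which does give your displayed sequence. Your approach is more computational but yields extra information (fiberwise triviality of $\cS'$), whereas the paper's filtration argument is shorter and hands one the explicit structure of $\cS'$ as an iterated extension of pullbacks.
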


Later we will identify the bundle $\cS'$ constructed as extension~\eqref{eq:seq-s} with the spinor bundle $\cS$ on~$\rQ$.

\begin{proof}
We will construct this exact sequence on $\rF$, and then restrict it to $M$.
First, note that by Lemma~\ref{lemma:cohomology-f} we have $\Ext^\bullet(\cU^\vee(-h),\cU) \cong H^\bullet(\rF,\cU \otimes \cU(h)) \cong \kk[-1]$, hence there is a canonical extension of $\cU^\vee(-h)$ by $\cU$.
We denote by $\cS'$ the extension, so that we have an exact sequence~\eqref{eq:seq-s}.
Obviously, $\cS'$ is locally free of rank~4.
We have to check that it is a pullback from $\rQ$.

Using exact sequences
\begin{equation*}
0 \to \cO_\rF(-h) \to \cU \to \cO_\rF(h-H) \to 0
\qquad\text{and}\qquad
0 \to \cO_\rF(H-2h) \to \cU^\vee(-h) \to \cO_\rF \to 0
\end{equation*}
(obtained from~\eqref{eq:seq-u} by the dualization and a twist) and the cohomology computations of Lemma~\ref{lemma:cohomology-f}, 
we see that extension~\eqref{eq:seq-s} is induced by a class in $\Ext^1(\cO_\rF(H-2h),\cO_\rF(h-H)) \cong H^\bullet(\rF,\cO_\rF(3h-2H) = \kk[-1]$.
By~\eqref{eq:seq-k} the corresponding extension is~$\cK^\vee(-2h)$.
It follows that the sheaf $\cS'$ has a 3-step filtration with factors being $\cO_\rF(-h)$, $\cK^\vee(-h)$, and~$\cO_\rF$.
All these sheaves are pullbacks from $\rQ$, and since the subcategory $\pi^*(\bD(\rQ)) \subset \bD(\rF)$ is triangulated (because the functor $\pi^*$ is fully faithful), it follows that~$\cS'$ is also a pullback from $\rQ$.
\end{proof}


\medskip

Now we are ready to explain the mutations.
We start with a semiorthogonal decomposition
\begin{equation}\label{step0}
\bD(M) = \langle \cO_M(-H),\cU,\cO_M,\cU^\vee,\cO_M(H),\cU^\vee(H), \Phi_0(\bD(Y)) \rangle,
\end{equation}
\begin{equation}\label{eq:phi0}
\Phi_0 = j_*\circ q^* \colon \bD(Y) \to \bD(M),
\end{equation} 
obtained by plugging~\eqref{eq:d-g} into the right hand side of~\eqref{eq:d-m}. 
Now we apply a sequence of mutations, modifying the functor $\Phi_0$.

First, we mutate $\Phi_0(\bD(Y))$ two steps to the left:
\begin{equation}\label{step1}
\bD(M) = \langle \cO_M(-H),\cU,\cO_M,\cU^\vee,\Phi_1(\bD(Y)),\cO_M(H),\cU^\vee(H) \rangle,
\end{equation}
\begin{equation}\label{eq:phi1}
\Phi_1 = \bL_{\langle \cO_M(H), \cU^\vee(H) \rangle} \circ \Phi_0.
\end{equation} 
Here $\bL$ denotes the left mutation functor.

Next, we mutate the last two terms to the far left (these objects got twisted by $K_M = -h - H$):
\begin{equation*}\label{step2}
\bD(M) = \langle \cO_M(-h),\cU^\vee(-h),\cO_M(-H),\cU,\cO_M,\cU^\vee,\Phi_1(\bD(Y)) \rangle.
\end{equation*}

Next, we mutate $\cO_M(-h)$ and $\cU^\vee(-h)$ one step to the right. 
As $\Ext^\bullet(\cU^\vee(-h),\cO_M(-H)) \cong H^\bullet(M,\cU(h-H)) = 0$,
and $\Ext^\bullet(\cO_M(-h),\cO_M(-H)) \cong H^\bullet(M,\cO_M(h-H)) = 0$ by Corollary~\ref{corollary:cohomology-m}, we obtain
\begin{equation*}\label{step3}
\bD(M) = \langle \cO_M(-H),\cO_M(-h),\cU^\vee(-h),\cU,\cO_M,\cU^\vee,\Phi_1(\bD(Y)) \rangle.
\end{equation*}

Next, we mutate $\cU$ one step to the left. 
As $\Ext^\bullet(\cU^\vee(-h),\cU) \cong H^\bullet(\cU \otimes \cU(h)) \cong \kk[-1]$ by Corollary~\ref{corollary:cohomology-m}, the resulting mutation is an extension, which in view of~\eqref{eq:seq-s} gives $\cS'$.
Thus, we obtain
\begin{equation*}\label{step4}
\bD(M) = \langle \cO_M(-H),\cO_M(-h),\cS',\cU^\vee(-h),\cO_M,\cU^\vee,\Phi_1(\bD(Y)) \rangle.
\end{equation*}


Next, we mutate $\cO_M(-H)$ to the far right (this object got twisted by $-K_M = h + H$):
\begin{equation*}\label{step6}
\bD(M) = \langle \cO_M(-h),\cS',\cU^\vee(-h),\cO_M,\cU^\vee,\Phi_1(\bD(Y)),\cO_M(h) \rangle.
\end{equation*}

Next, we mutate $\Phi_1(\bD(Y))$ one step to the right:
\begin{equation*}\label{step7}
\bD(M) = \langle \cO_M(-h),\cS',\cU^\vee(-h),\cO_M,\cU^\vee,\cO_M(h),\Phi_2(\bD(Y)) \rangle, 
\end{equation*}
\begin{equation}\label{eq:phi2}
\Phi_2 = \bR_{\cO_M(h)} \circ \Phi_1.
\end{equation} 
Here $\bR$ denotes the right mutation functor.

Next, we mutate simultaneously $\cU^\vee(-h)$ and $\cU^\vee$ one step to the right. 
As $\Ext^\bullet(\cU^\vee(-h),\cO_M) \cong \Ext^\bullet(\cU^\vee,\cO_M(h)) = H^\bullet(M,\cU(h)) = \kk$ by Corollary~\ref{corollary:cohomology-m}, the resulting mutation is the cone of a morphism, 
which in view of~\eqref{eq:seq-u} and its twist by $\cO_M(-h)$ gives $\cO_M(H-2h)$ and $\cO_M(H-h)$ respectively.
Thus we obtain
\begin{equation*}\label{step8}
\bD(M) = \langle \cO_M(-h),\cS',\cO_M,\cO_M(H-2h),\cO_M(h),\cO_M(H-h),\Phi_2(\bD(Y)) \rangle.
\end{equation*}

Next, we mutate $\cO_M(h)$ one step to the left.
As $\Ext^\bullet(\cO_M(H-2h),\cO_M(h)) \cong H^\bullet(M,\cO_M(3h-H)) = 0$ by Corollary~\ref{corollary:cohomology-m}, we obtain
\begin{equation*}\label{step9}
\bD(M) = \langle \cO_M(-h),\cS',\cO_M,\cO_M(h),\cO_M(H-2h),\cO_M(H-h),\Phi_2(\bD(Y)) \rangle.
\end{equation*}

Next, we mutate $\Phi_2(\bD(Y))$ two steps to the left:
\begin{equation*}\label{step10}
\bD(M) = \langle \cO_M(-h),\cS',\cO_M,\cO_M(h),\Phi_3(\bD(Y)),\cO_M(H-2h),\cO_M(H-h) \rangle,
\end{equation*}
\begin{equation}\label{eq:phi3}
\Phi_3 = \bL_{\langle \cO_M(H-2h), \cO_M(H-h) \rangle} \circ \Phi_2.
\end{equation}

Finally, we mutate $\cO_M(H-2h)$ and $\cO_M(H-h)$ to the far left:
\begin{equation}\label{step11}
\bD(M) = \langle \cO_M(-3h),\cO_M(-2h),\cO_M(-h),\cS',\cO_M,\cO_M(h),\Phi_3(\bD(Y)) \rangle.
\end{equation}

\medskip

Now we finished with mutations, and it remains to check that the resulting semiorthogonal decomposition provides an equivalence of categories.
To do this, we first observe the following

\begin{lemma}
The bundle $\cS'$ is isomorphic to the spinor bundle $\cS$ on $\rQ$.
\end{lemma}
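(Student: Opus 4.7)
The goal is to identify $\cS'$ on $\rQ$ (where it descends from $\rF$ by the preceding proposition) with Kapranov's spinor bundle $\cS$. My strategy is to verify that $\cS'$ occupies the same slot as $\cS$ in the exceptional collection \eqref{eq:d-q} and matches its numerical invariants, then invoke a rigidity argument.

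First I would push the $3$-step filtration of $\cS'$ on $\rF$ constructed in the preceding proof (with graded pieces $\cO_\rF(-h)$, $\cK^\vee(-2h)$, $\cO_\rF$, all pulled back from $\rQ$) down to $\rQ$. Since $\pi$ is a $\PP^1$-bundle with $R\pi_*\cO_\rF = \cO_\rQ$, the counit $\pi_*\pi^* \to \id$ is an isomorphism, so the filtration descends, giving $\cS'$ on $\rQ$ a $3$-step filtration with factors $\cO_\rQ(-h)$, $\cK^\vee(-2h)$, $\cO_\rQ$. In particular $\cS'$ is a rank-$4$ vector bundle on $\rQ$ with $c_1(\cS') = -2h$, matching the numerical invariants of $\cS$.

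Next I would verify that $\cS'$ is exceptional and satisfies the same semiorthogonalities with respect to the rest of \eqref{eq:d-q}: namely $\Ext^\bullet(\cS', \cO_\rQ(kh)) = 0$ for $k \in \{-3,-2,-1\}$, $\Ext^\bullet(\cO_\rQ(kh), \cS') = 0$ for $k \in \{0,1\}$, and $\Ext^\bullet(\cS', \cS') = \kk$. Via the adjunction $\pi^* \dashv \pi_*$, all these Ext groups on $\rQ$ equal the corresponding Ext groups on $\rF$, where the $3$-step filtration together with \eqref{eq:seq-k} for $\cK^\vee$ reduces them to cohomology of line bundles of the form $\cO_\rF(ah + bH)$, accessible through Lemma~\ref{lemma:cohomology-f} and routine Leray-type arguments.

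Finally, to conclude $\cS' \cong \cS$, I would combine exceptionality and Ext-vanishing with the numerical matching. A clean route is to construct a nonzero morphism $\cS \to \cS'$ explicitly from the parallel filtration structures on the two bundles, and then observe that its cone, being semiorthogonal to a large portion of the collection while of bounded rank, must vanish by an Euler characteristic count using \eqref{eq:d-q}. The main obstacle lies precisely in this identification step, since the paper deliberately avoids assuming fullness of the Kapranov collection; nevertheless the $3$-step filtration is strong enough both to produce the morphism and to verify that it is an isomorphism without appealing to a uniqueness theorem for exceptional collections.
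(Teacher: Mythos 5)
There is a genuine gap in your final identification step, and it stems from a misreading of the paper. The fullness that the paper ``deliberately avoids'' is that of the collection~\eqref{eq:d-g} on the $\sG_2$-Grassmannian $\rG$ (see Remark~\ref{remark:full}); the fullness of Kapranov's collection~\eqref{eq:d-q} on the quadric $\rQ$ is a known theorem and is exactly what the paper's proof uses. The actual argument is short: the first six objects of~\eqref{step11} are pullbacks by $\pi_M$, so by full faithfulness of $\pi_M^*$ the corresponding objects are semiorthogonal on $\rQ$ (no direct cohomology computations needed --- the semiorthogonality comes for free from the mutation process); hence $\cS'$ lies in the slot of~\eqref{eq:d-q} between $\cO_\rQ(-h)$ and $\cO_\rQ$, which by fullness is the subcategory $\langle \cS \rangle$ generated by the single exceptional object $\cS$; therefore $\cS'$ is a direct sum of shifts of $\cS$, and the rank count $4=4$ forces $\cS' \cong \cS$.

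By refusing this route you are left with a step that does not work as stated. First, you propose to build a nonzero morphism $\cS \to \cS'$ ``from the parallel filtration structures,'' but $\cS$ is not presented anywhere with a filtration by $\cO_\rQ(-h)$, $\cK^\vee(-2h)$, $\cO_\rQ$ (the bundle $\cK$ is $\sG_2$-specific and has nothing a priori to do with the spinor bundle), so there is no source for such a morphism. Second, even granting a morphism, an Euler characteristic count only shows that the class of its cone vanishes in $K_0(\rQ)$, which does not imply the cone is zero; and semiorthogonality of the cone to the five line bundles gives nothing unless you know those line bundles together with $\cS$ generate $\bD(\rQ)$ --- i.e., unless you use fullness of~\eqref{eq:d-q} after all. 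The fix is simply to invoke Kapranov's theorem, at which point your preliminary verifications (descent of the filtration, rank and $c_1$) become largely redundant except for the rank comparison.
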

\begin{proof}
The first six objects in~\eqref{step11} are pullbacks from $\rQ$ by $\pi_M$.
Since $\pi_M^*$ is fully faithful, the corresponding objects on $\rQ$ are also semiorthogonal.
In particular, the bundle $\cS'$ on $\rQ$ is right orthogonal to~$\cO_\rQ$ and $\cO_\rQ(h)$ and left orthogonal to $\cO_\rQ(-3h)$, $\cO_\rQ(-2h)$, and $\cO_\rQ(-h)$.
By~\eqref{eq:d-q} the intersection of these orthogonals is generated by the spinor bundle~$\cS$.
Therefore, $\cS'$ is a multiple of the spinor bundle~$\cS$.
Since the ranks of both $\cS'$ and $\cS$ are 4, the multiplicity is 1, so $\cS' \cong \cS$.
\end{proof}


Thus the first six objects of~\eqref{step11} generate $\pi_M^*(\bD(\rQ))$. 
Comparing~\eqref{step11} with~\eqref{eq:d-q} and~\eqref{eq:d-m}, we conclude that the last component $\Phi_3(\bD(Y))$ coincides with $i_*q^*(\bD(X))$. 
Altogether, this proves the following

\begin{theorem}
The functor 
\begin{equation*}
\Phi_3 = \bL_{\langle \cO(H-2h), \cO(H-h) \rangle} \circ \bR_{\cO(h)} \circ \bL_{\langle \cO(H), \cU^\vee(H) \rangle} \circ j_* \circ q^* \colon \bD(Y) \to \bD(M)
\end{equation*}
is an equivalence of $\bD(Y)$ onto the triangulated subcategory of $\bD(M)$ equivalent to $\bD(X)$ via the embedding $i_* \circ p^* \colon \bD(X) \to \bD(M)$.
In particular, the functor
\begin{equation*}
\Psi = p_* \circ i^! \circ \bL_{\langle \cO(H-2h), \cO(H-h) \rangle} \circ \bR_{\cO(h)} \circ \bL_{\langle \cO(H), \cU^\vee(H) \rangle} \circ j_* \circ q^* \colon \bD(Y) \to \bD(X)
\end{equation*}
is an equivalence of categories.
\end{theorem}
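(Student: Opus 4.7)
The plan is to notice that the bulk of the work has already been done in the mutation sequence culminating in~\eqref{step11}, so the remaining task is to identify the residual component $\Phi_3(\bD(Y))$ with $i_* p^*(\bD(X))$ and to track that each step of the construction is an equivalence. The key input is the preceding lemma identifying $\cS'$ with the spinor bundle $\cS$ on $\rQ$, together with comparison of the two semiorthogonal decompositions in~\eqref{eq:d-m}.

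First I would observe that $\Phi_0 = j_* \circ q^*$ is fully faithful: this is Orlov's blowup formula applied to $\rho_M \colon M \to \rG$, since $E$ is the exceptional divisor over $Y$ and $q \colon E \to Y$ is a $\P^1$-bundle. Each of the left and right mutation functors $\bL$ and $\bR$ appearing in the passage from $\Phi_0$ to $\Phi_1$ to $\Phi_2$ to $\Phi_3$ is an equivalence between the corresponding semiorthogonal components of $\bD(M)$, so $\Phi_3$ remains fully faithful, with essential image equal to the rightmost component of~\eqref{step11}.

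Next I would compare~\eqref{step11} with the blowup semiorthogonal decomposition $\bD(M) = \langle \pi_M^*(\bD(\rQ)), i_* p^*(\bD(X)) \rangle$ recorded in~\eqref{eq:d-m}. By the spinor identification $\cS' \cong \pi_M^* \cS$, the first six objects in~\eqref{step11} are $\pi_M$-pullbacks of the exceptional collection~\eqref{eq:d-q}, and since $\pi_M^*$ is fully faithful they generate the subcategory $\pi_M^*(\bD(\rQ))$. Uniqueness of the right-orthogonal component in a semiorthogonal decomposition then forces $\Phi_3(\bD(Y)) = i_* p^*(\bD(X))$, which is the first assertion of the theorem.

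For the functor $\Psi$, I would use that $i_* \circ p^*$ is fully faithful with right adjoint $p_* \circ i^!$; this adjoint, restricted to the essential image of $i_* p^*$, furnishes the inverse equivalence onto $\bD(X)$. Composing with $\Phi_3$ then delivers the desired equivalence $\Psi \colon \bD(Y) \xrightarrow{\sim} \bD(X)$. Given that the heavy lifting — the explicit mutation calculations and the extension-class identification producing $\cS'$ — was carried out before the theorem statement, no step of this final argument is a genuine obstacle; the only point deserving care is the implicit appeal to fullness of the collection~\eqref{eq:d-q} for $\bD(\rQ)$, and even this can be avoided by reading the mutation chain in reverse to establish the fullness as a byproduct.
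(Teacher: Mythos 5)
Your proposal is correct and follows essentially the same route as the paper: it uses the lemma $\cS'\cong\cS$ to identify the first six objects of~\eqref{step11} with $\pi_M^*$ of the collection~\eqref{eq:d-q}, invokes uniqueness of the complementary component to conclude $\Phi_3(\bD(Y)) = i_*p^*(\bD(X))$, and obtains $\Psi$ from the adjoint $p_*\circ i^!$. The only additions are explicit justifications (Orlov's blowup formula for full faithfulness of $j_*\circ q^*$, mutations being equivalences) that the paper leaves implicit, plus a small conflation at the end: the fullness that can be extracted as a byproduct of the mutation chain (Remark~\ref{remark:full}) is that of~\eqref{eq:d-g} on $\rG$, whereas fullness of~\eqref{eq:d-q} on $\rQ$ is genuinely used as known input from~\cite{Kap}.
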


\begin{remark}\label{remark:full}
Let us sketch how the arguments above can be also used to prove fullness of~\eqref{eq:d-g}.
Denote by~$\cC$ the orthogonal to the collection~\eqref{eq:d-g} in $\bD(\rG)$.
Then we still have a semiorthogonal decomposition~\eqref{step0}, with~$\Phi_0(\bD(Y))$ replaced by $\langle \cC, \Phi_0(\bD(Y)) \rangle$.
We can perform the same sequence of mutation, keeping the subcategory $\cC$ together with $\bD(Y)$.
For instance, in~\eqref{step1} we write $\langle \bL_{\langle \cO_M(H), \cU^\vee(H) \rangle}(\cC), \Phi_1(\bD(Y)) \rangle$ instead of just $\Phi_1(\bD(Y))$ and so on.
In the end, we arrive at~\eqref{step11} with $\Phi_3(\bD(Y))$ replaced by $\langle \cC', \Phi_3(\bD(Y)) \rangle$ with $\cC'$ equivalent to $\cC$.
Comparing it with~\eqref{eq:d-q} and~\eqref{eq:d-m}, we deduce that $\bD(X)$ has a semiorthogonal decomposition with two components equivalent to $\cC$ and $\bD(Y)$.
But $X$ is a Calabi--Yau variety, hence its derived category has no nontrivial semiorthogonal decompositions by~\cite{Bri}.
Therefore $\cC = 0$ and so exceptional collection~\eqref{eq:d-g} is full.
\end{remark}


\noindent{\bf Acknowledgement:} 
I would like to thank Shinnosuke Okawa and Evgeny Shinder for their comments on the first draft of the paper.


\begin{thebibliography}{XXXX}

\bibitem[Bri]{Bri}
T. Bridgeland, 
{\it Equivalences of triangulated categories and Fourier--Mukai transforms}, 
Bull. London Math. Soc. 31, no. 1 (1999), 25--34.

\bibitem[IMOU]{IMOU}
Ito, A., Miura, M., Okawa, S., Ueda, K., 
{\it The class of the affine line is a zero divisor in the Grothendieck ring: via $G_2$-Grassmannians}, 
arXiv preprint arXiv:1606.04210.

\bibitem[Kap]{Kap}
Kapranov, M., 
{\it On the derived categories of coherent sheaves on some homogeneous spaces}, 
Inventiones mathematicae, 92(3) 1988, 479--508.

\bibitem[Kuz]{K}
Kuznetsov, A., 
{\it Hyperplane sections and derived categories},
Izvestiya RAN: Ser. Mat. 70:3 (2006) p. 23--128 (in~Russian); 
translation in Izvestiya: Mathematics 70:3 (2006) p. 447--547.


\end{thebibliography}
\end{document}